\documentclass[12pt]{amsart}

\linespread{1.25}

\usepackage{amsmath,amssymb,mathrsfs}

\usepackage{amsthm} 

\usepackage{srcltx} 
\usepackage{times}
\usepackage{amsmath,amssymb,mathrsfs}

\usepackage{amsthm} 
\usepackage{srcltx}

 \usepackage[dvips]{graphicx,color}

\usepackage{amsopn,cite,mathrsfs}
\usepackage{amsfonts}
\usepackage{eucal}

\numberwithin{equation}{section}

\newtheorem{thm}{Theorem}[section]
\newtheorem{lemma}[thm]{Lemma}

\newtheorem*{proof-mthm}{Proof of Main Theorem}

 \newcommand{\blem}{\begin{lemma}}
 \newcommand{\elem}{\end{lemma}}
\newcommand{\bpve}{\begin{proof}}
\newcommand{\epve}{\end{proof}}

\newtheorem*{thmA}{Theorem}

\usepackage{amssymb}
\usepackage{amsmath}
\usepackage{amsfonts}
\usepackage[colorlinks=true,linkcolor=red]{hyperref}
\setcounter{MaxMatrixCols}{11}

\theoremstyle{plain}

\numberwithin{equation}{section}

\begin{document}

\title{On the Commutativity of the Berezin Transform}

\author{Alexander Borichev}

\address{Aix-Marseille University, CNRS, I2M, Marseille,
France}

\email{alexander.borichev@math.cnrs.fr}

\author{G\'erard Fantolini}

\address{Aix-Marseille University, CNRS, I2M, Marseille,
France.}

\email{fantolinigerard@gmail.com}

\author{El-Hassan Youssfi}

\address{Aix-Marseille University, CNRS, I2M, Marseille,
France.}

\email{el-hassan.youssfi@univ-amu.fr}

\keywords{Reproducing kernel; Fock space; Stieltjes moments.}

\begin{abstract}
We consider the commutativity problem for the Berezin transform on weighted Fock spaces. 
Given a real number $m>0$, for every $\alpha >0$ we denote by $B_{\alpha}$ the Berezin transform associated  to the measure  $\mu_{m}^{\alpha}$ 
with density proportional to $e^{-\alpha |z|^m}$ with respect to Lebesgue measure on the complex plane and normalized so that $\mu_{\phi}^{\alpha}(\mathbb C) =1$. 
We show that the commutativity relation $B_{\alpha}B_{\beta}f=B_{\beta}B_{\alpha}f$ holds for all $f\in L^{\infty}(\mathbb C)$ and $\alpha,\beta> 0$  if and only if $m=2$.  \end{abstract}

\maketitle

\section{Introduction and statement of the main result}

We fix a real number $m>0$ and for every $\alpha >0$ we consider the (normalized) measure on the complex plane  
$$
d\mu_{\alpha,m}(z)= \frac{m\alpha^{2/m}}{2\pi\Gamma(2/m)}e^{-\alpha |z|^m}\,dA(z),
$$ 
where  $dA$ is Euclidean area measure on the complex plane $\mathbb C$ and $\Gamma$ is the Euler gamma function.
We denote by $L_{\alpha,m}^{2}(\mathbb C)$ the space of all square integrable functions with respect to the measure $\mu_{\alpha,m}$ and let 
$$
F_{\alpha,m}^{2}(\mathbb C)=L_{\alpha,m}^{2}(\mathbb C)\cap {\rm Hol}(\mathbb C)
$$
denote the corresponding weighted Fock space. The space $F_{\alpha,m}^{2}(\mathbb C)$ is a Hilbert space of entire functions, and we denote by $K_{\alpha,m}(\cdot,\cdot)$ its reproducing kernel,
$$
f(z)=\langle f,K_{\alpha,m}(\cdot,z) \rangle,\qquad f\in F_{\alpha,m}^{2}(\mathbb C),\, z\in \mathbb C.
$$ 
Let $P_+$ be the orthogonal projection from $L_{\alpha,m}^{2}(\mathbb C)$ onto $F_{\alpha,m}^{2}(\mathbb C)$. Given $f\in L^{\infty}(\mathbb C)$, we define the Toeplitz operator $T_f$ on $F_{\alpha,m}^{2}(\mathbb C)$ by the formula 
$$
T_fg=P_+(fg),\qquad g\in F_{\alpha,m}^{2}(\mathbb C).
$$
The Berezin transform of $T_f$ (or the Berezin transform of $f$) is given by
\begin{multline*}
(B_{\alpha,m}T_f)(z)=(B_{\alpha,m}f)(z)=\frac{\langle T_fK_{\alpha,m}(\cdot,z),K_{\alpha,m}(\cdot,z)\rangle}{K_{\alpha,m}(z,z)}
\\=\displaystyle\int_{\mathbb C}f(w) \frac{|K_{\alpha,m}(w,z)|^2}{K_{\alpha,m}(z,z)}\,d\mu_{m}^{\alpha}(w), \qquad z\in\mathbb C.
\end{multline*}
For the Berezin transform of operators in a more general context and for the Berezin transform of functions see \cite{B,E1,E2,Z}.

Since $\|K_{\alpha,m}(\cdot,z) \|^2=K_{\alpha,m}(z,z)$, it is clear that $B_{\alpha,m}$ is a contraction on $L^{\infty}(\mathbb C) $ 
for every $\alpha>0$.

In the case of the classical Fock spaces, that is, when $m=2$, it is known (see, for instance, \cite[Chapter 2]{Z} that the Berezin transform satisfies the commutativity relation $B_{\alpha,2}B_{\beta,2}=B_{\beta,2}B_{\alpha,2}$ on 
$L^{\infty}(\mathbb C)$.
 

The problem we consider here is to find out for which positive real numbers $m$ does the latter commutativity remain true.  The main goal of this paper is to establish the following:

\begin{thmA}\label{main}  Assume that $m>0$  and consider the test functions  $f_\delta(z):=e^{-\delta|z|^m}$, $z\in\mathbb C$, $\delta>0$. Then the following are equivalent
\begin{itemize}
\item[(1)] The commutativity  relation   $B_{\alpha,m}B_{\beta,m}f=B_{\beta,m}B_{\alpha,m}f$  holds for all $f\in L^{\infty}(\mathbb C)$ and  all $\alpha, \beta > 0$.
\item[(2)] The commutativity  relation   $B_{\alpha,m}B_{\beta,m}f_\delta =B_{\beta,m}B_{\alpha,m}f_\delta$  holds for all  $\alpha, \beta, \delta >0.$
\item[(3)] The commutativity  relation   $(B_{\alpha,m}B_{\beta,m}f_\delta)(0) =(B_{\beta,m}B_{\alpha,m}f_\delta)(0)$  holds for all $\alpha, \beta, \delta >0.$
\item[(4)] $m=2.$
\end{itemize}
\end{thmA}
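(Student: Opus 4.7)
The implications $(1)\Rightarrow(2)\Rightarrow(3)$ are immediate, and $(4)\Rightarrow(1)$ is the classical Fock result recalled above, so the content is $(3)\Rightarrow(4)$. With $q:=2/m$, orthogonality of monomials yields $K_{\alpha,m}(z,w)=\sum_{n}\Gamma(q)\alpha^{qn}(z\bar w)^{n}/\Gamma(q(n+1))$; in particular $K_{\alpha,m}(\cdot,0)\equiv 1$ and $K_{\alpha,m}(z,z)=E(\alpha|z|^{m})$, where $E(\rho):=\sum_{n}\Gamma(q)\rho^{qn}/\Gamma(q(n+1))$ is a rescaled Mittag--Leffler function with $E(\rho)\sim \tfrac{m\Gamma(q)}{2}\rho^{1-q}e^{\rho}$ as $\rho\to\infty$. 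A direct series calculation, using radial symmetry and orthogonality to eliminate cross-terms in $|K_{\beta,m}(w,z)|^{2}$, gives
\[
(B_{\beta,m}f_{\delta})(z)=\Bigl(\frac{\beta}{\beta+\delta}\Bigr)^{q}\,\frac{E\bigl(\beta^{2}|z|^{m}/(\beta+\delta)\bigr)}{E(\beta|z|^{m})}.
\]

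Since $K_{\alpha,m}(\cdot,0)\equiv 1$, one has $(B_{\alpha,m}g)(0)=\int_{\C}g\,d\mu_{\alpha,m}$. Setting $a:=\alpha/\beta$, $\gamma:=\beta/(\beta+\delta)$ and changing variables $v=\beta|z|^{m}$,
\[
(B_{\alpha,m}B_{\beta,m}f_{\delta})(0)=\frac{(a\gamma)^{q}}{\Gamma(q)}\int_{0}^{\infty}v^{q-1}e^{-av}\frac{E(\gamma v)}{E(v)}\,dv,
\]
and $(B_{\beta,m}B_{\alpha,m}f_{\delta})(0)$ is the same expression with $a\mapsto 1/a$ and $\gamma\mapsto \gamma':=\alpha/(\alpha+\delta)$. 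Letting $\delta\to\infty$ one has $\gamma\to 0^{+}$ and $\gamma'=a\gamma/(1-\gamma+a\gamma)\sim a\gamma$; using $0\le E(\gamma v)/E(v)\le 1$ and $E(\gamma v)-1=O((\gamma v)^{q})$ near $0$, the integral differs from
\[
\Psi(a):=\int_{0}^{\infty}\frac{v^{q-1}e^{-av}}{E(v)}\,dv
\]
by $O(\gamma^{q})$. Equating leading orders in the commutativity hypothesis $(3)$ then forces $\Psi(1/a)=a^{q}\Psi(a)$ for all $a>0$, i.e.\ $T(a):=a^{q/2}\Psi(a)$ is invariant under $a\mapsto 1/a$.

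To conclude, I compare two expansions of $T$ near $a=0^{+}$. Let $N(s):=\int_{0}^{\infty}v^{s-1}/E(v)\,dv$, well-defined for $s>0$ since $1/E(v)\le Cv^{q-1}e^{-v}$ at infinity gives $N(q+k)\le C\Gamma(2q+k-1)$; in particular $\Psi$ is real-analytic on $|a|<1$, so
\[
T(a)=N(q)a^{q/2}-N(q+1)a^{q/2+1}+O(a^{q/2+2}).
\]
A Watson-type argument --- substitute $v=u/b$ in $\Psi(b)$, expand $1/E(u/b)=1-\Gamma(q)(u/b)^{q}/\Gamma(2q)+O((u/b)^{2q})$ for $u\le b^{1/2}$, and bound the tail $u>b^{1/2}$ using $e^{-u}$ --- yields the Poincar\'e asymptotic $\Psi(b)=\Gamma(q)b^{-q}-\Gamma(q)b^{-2q}+O(b^{-3q})$ as $b\to\infty$, hence
\[
T(a)=T(1/a)=\Gamma(q)a^{q/2}-\Gamma(q)a^{3q/2}+O(a^{5q/2}).
\]
Matching the $a^{q/2}$ terms gives $N(q)=\Gamma(q)$. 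Setting $F(a):=T(a)-\Gamma(q)a^{q/2}$: if $q>1$ then $F(a)/a^{q/2+1}\to -N(q+1)$ from Taylor but $F(a)/a^{q/2+1}=O(a^{q-1})\to 0$ from Watson, forcing $N(q+1)=0$ and contradicting $v^{q}/E(v)>0$; if $q<1$, the analogous comparison at the exponent $3q/2$ forces $\Gamma(q)=0$, absurd. Hence $q=1$, i.e.\ $m=2$.

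The main obstacle is the Watson-type asymptotic for $\Psi$ at infinity, since $1/E(x)$ only admits a local fractional-power expansion near $0$: one has to split the integral at an intermediate scale (I suggest $u=b^{1/2}$) and use the super-polynomial decay of $e^{-u}$ to kill the region where the local expansion of $1/E$ breaks down. Once this is in place, the final step is essentially forced by the incompatibility of the integer-step power scale $\{a^{q/2+k}\}$ on the Taylor side and the $q$-step power scale $\{a^{q/2+nq}\}$ on the Watson side, which can coincide only for $q=1$.
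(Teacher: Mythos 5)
Your argument is correct, and for the crucial implication $(3)\Rightarrow(4)$ it takes a genuinely different route from the paper. After the same trivial reductions and essentially the same computation of $(B_{\alpha,m}B_{\beta,m}f_\delta)(0)$ (your closed formula $(B_{\beta,m}f_\delta)(z)=\gamma^{q}E(\gamma\beta|z|^m)/E(\beta|z|^m)$, $\gamma=\beta/(\beta+\delta)$, $q=2/m$, is a nice way to package the series in \eqref{e1}), you retain only the single identity obtained in the limit $\delta\to\infty$, namely $\Psi(1/a)=a^{q}\Psi(a)$ with $a=\alpha/\beta$ and $\Psi(a)=\int_0^\infty v^{q-1}e^{-av}E(v)^{-1}\,dv$; this is precisely the $n=0$ relation $U_{\alpha,\beta}(0)=U_{\beta,\alpha}(0)$ of Lemma~\ref{l1} written in one variable. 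Where the paper then splits into the non-even case (Lemma~\ref{mnoneven}, the next fractional power of $t$) and the even case (Lemma~\ref{moments}, with the coefficient extraction at $b=p,2p$, the differential recurrences \eqref{tt6}--\eqref{tt7}, the symmetric mixed derivative $H$, and $\beta\to\infty$ asymptotics), you instead exploit that $T(a)=a^{q/2}\Psi(a)$ is invariant under $a\mapsto1/a$ and compare its integer-step Taylor expansion at $a\to0^{+}$ with the $q$-step Watson expansion transported from infinity, $\Gamma(q)a^{q/2}-\Gamma(q)a^{3q/2}+O(a^{5q/2})$; matching forces $N(q)=\Gamma(q)$ and then $N(q+1)=0$ if $q>1$ or $\Gamma(q)=0$ if $q<1$, both impossible, so $q=1$, i.e.\ $m=2$ (and the check $E(v)=e^{v}$, $\Psi(a)=1/(1+a)$ at $q=1$ confirms consistency). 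Your route is uniform in $m$, needs only the zeroth-order consequence of commutativity, and avoids the differentiation identities; the paper's route, in exchange, yields the whole family of moment identities $U_{\alpha,\beta}(n)=U_{\beta,\alpha}(n)$, $0\le n<m/2$, and never needs a Watson-type expansion. Two details you should write out, neither of which is a gap: the global bound $1/E(v)\le Cv^{q-1}e^{-v}$ fails near $v=0$ when $q>1$, but you only need it for $v\ge1$ (indeed superpolynomial growth of $E$, which is immediate from any single term of its series, already gives finiteness of $N(q+1)$ and $N(q+2)$, which is all the Taylor side requires); and the Watson step should be carried out with the split you indicate, using $1/E(v)=1-\tfrac{\Gamma(q)}{\Gamma(2q)}v^{q}+O(v^{2q})$ near $0$ and the crude tail bound $1/E\le1$, so the refined Mittag--Leffler asymptotics at infinity are not actually needed.
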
 

The proof of this result relies on expansions of the reproducing kernel using the sequence of the Stieltjes even moments $(s_d)$ of the measure $ e^{-\alpha t^{m}}\,dt$ on $[0,+\infty[$ which will be defined in the sequel. For similar arguments involving the Stieltjes moments see \cite{BY,CF,SY}.

\section{ Proof of the  main result}

We start with the equality
$$
\|z^n\|^2_{\alpha,m}=\frac{\Gamma(2(n+1)/m)}{\Gamma(2/m)\alpha^{2n/m}},\qquad n\in \mathbb Z_+.
$$
Therefore, we have 
$$ 
K_{\alpha,m}(z,w)=\sum_{n\ge 0}\frac{(z\overline{w})^n}{\|z^n\|^2_{\alpha,m}}=
\Gamma(2/m)
\sum_{n\ge 0}\frac{(\alpha^{\frac{2}{m}}z\overline{w})^{n}}{\Gamma(2(n+1)/m)}.
$$

Next we define the Stieltjes moments
$$
s_n(\alpha,m):= \alpha^{-2n/m}\Gamma(2(n+1)/m),
$$
and consider an entire function
$$
S_{\alpha,m}(\zeta):= \sum_{n\ge0}\frac{\zeta^n}{s_n(\alpha,m)}. 
$$
Then
$$
K_{\alpha,m}(z,w)=\Gamma(2/m)S_{\alpha,m}(z\overline{w}).
$$

Consequently, the Berezin transform can be expressed as 
\begin{multline*}
(B_{\alpha,m}f)(z)=\int_{\mathbb C}f(w)\frac{|K_{\alpha,m}(w,z)|^2}{K_{\alpha,m}(z,z)}\,d\mu_{\alpha,m}(w)\\
 = \frac{m\alpha^{2/m}}{2\pi S_{\alpha,m}(|z|^2)}  \int_{\mathbb C}f(w) |S_{\alpha,m}(z\overline{w})|^2
 e^{-\alpha |w|^m}\,dA(w).
\end{multline*}

Since 
$$
S_{\alpha,m}(0)=s_0(\alpha,m)^{-1}= \Gamma(2/m)^{-1},
$$
we have
\begin{equation}
(B_{\alpha,m}f)(0)=\frac{m\alpha^{2/m}}{2\pi\Gamma(2/m)}  \int_{\mathbb C}f(w) e^{-\alpha |w|^m}\,dA(w).
\label{e0}
\end{equation}

Furthermore, 
$$
|S_{\alpha,m}(\zeta)|^2=\sum_{k\ge 0}\sum_{0\le d\le k}\frac{\zeta^d\overline{\zeta}^{k-d}}{s_d(\alpha,m)s_{k-d}(\alpha,m)},\qquad \zeta\in\mathbb C.
$$
Applying the Berezin transform to the test function $f_{\delta}(w)=e^{-\delta|w|^{m}}$, $\delta\ge 0$, and using pairwise orthogonality of $w^q$, $q\in\mathbb Z_+$, on the circles $\partial D(0,R)$, $R>0$, we obtain 
\begin{multline}
(B_{\alpha,m}f_\delta)(z)\\= 
\frac{m\alpha^{2/m}}{2\pi S_{\alpha,m}(|z|^2)}  \sum_{k\ge 0} 
\sum_{0\le d\le k}\int_{\mathbb C}f_\delta(w)  
\frac{(z\overline{w})^d(w\overline{z})^{k-d}}{s_{d}(\alpha,m)s_{k-d}(\alpha,m)} e^{-\alpha |w|^m}\,dA(w) \\
 =   \frac{m\alpha^{2/m}}{2\pi S_{\alpha,m}(|z|^2)}\sum_{n\ge 0} |z|^{2n}  \int_{\mathbb C}
\frac{ |w|^{2n} }{s_n^2(\alpha,m)}     e^{-(\alpha+\delta) |w|^m}\,dA(w).\label{e1}
\end{multline}

By \eqref{e0} and \eqref{e1} we have 
\begin{multline*}
(B_{\beta,m}B_{\alpha,m}f_\delta)(0) = \frac{m\beta^{2/m}}{2\pi\Gamma(2/m)}   \int_{\mathbb C}  (B_{\alpha,m}f_\delta)(z) e^{-\beta |z|^m}\,dA(z)\\
=  \frac{m^2 (\alpha\beta)^{2/m} }{4\pi^2\Gamma(2/m)}   \sum_{n\ge 0}   \int_{\mathbb C} \frac{|z|^{2n}}{S_{\alpha,m}(|z|^2)}e^{-\beta |z|^m}\,dA(z)\,\int_{\mathbb C}  
\frac{ |w|^{2n}}{s_n^2(\alpha,m)} e^{-(\alpha + \delta)|w|^m}  \,dA(w) \\
= \frac{m (\alpha\beta)^{2/m} }{2\pi\Gamma(2/m)}  \sum_{n\ge 0} \frac1{s_n^2(\alpha,m)}   \left(\frac{1}{\delta+\alpha}\right)^{\frac{2n+2}{m}}\Gamma\left(\frac{2n+2}{m} \right)\int_{\mathbb C}  \frac{  |z|^{2n} e^{-  \beta |z|^m}  }{ S_{\alpha,m}(|z|^2)}  \,  dA(z)\\
 =    \frac{m (\alpha\beta)^{2/m} }{2\pi\Gamma(2/m)} \sum_{n\ge 0}  
  \frac{ \alpha^{4n/m}}{\Gamma\left(\frac{2n+2}{m} \right)}   \left(\frac{1}{\delta+\alpha}\right)^{\frac{2n+2}{m}}\int_{\mathbb C}  \frac{  |z|^{2n} e^{-  \beta |z|^m}  }{ S_{\alpha,m}(|z|^2)}  \,dA(z).
\end{multline*}
(Here we use that $f_\delta$, $B_{\alpha,m}f_\delta$ are positive and bounded so that one can apply the Foubini theorem.)

Thus,
\begin{multline}\label{BBR1}
(B_{\beta,m}B_{\alpha,m}f_\delta)(0) \\=  \frac{m (\alpha\beta)^{2/m}}{2\pi\Gamma(2/m)}\sum_{n\ge 0}  
   \left(\frac{1}{\delta+\alpha}\right)^{(2n+2)/m} \frac{ \alpha^{4n/m}}{\Gamma(\frac{2n+2}{m})} \int_{\mathbb C}  \frac{  |z|^{2n} e^{-  \beta |z|^m}  }{ S_{\alpha,m}(|z|^2)}\,dA(z).
\end{multline}

Set
$$
U_{\alpha,\beta}(n)=\frac{ \alpha^{4n/m}}{\Gamma(\frac{2n+2}{m})} \int_{\mathbb C}  \frac{  |z|^{2n} e^{-  \beta |z|^m}  }{ S_{\alpha,m}(|z|^2)}\,dA(z).
$$
Since the series in \eqref{BBR1} converges for $\delta=0$, we have
\begin{equation}
U_{\alpha,\beta}(n)\le C\alpha^{2n/m},\qquad  n\ge 0.
\label{estar}
\end{equation}

Now we rewrite \eqref{BBR1} as
\begin{equation}
(B_{\beta,m}B_{\alpha,m}f_\delta)(0) \\=  \frac{m (\alpha\beta)^{2/m}}{2\pi\Gamma(2/m)}\sum_{n\ge 0}  
   \left(\frac{1}{\delta+\alpha}\right)^{(2n+2)/m} U_{\alpha,\beta}(n).
\label{e7}
\end{equation}

\begin{lemma}\label{l1} Let $m,\alpha,\beta>0$. If the commutativity relation 
$$ 
(B_{\beta,m}B_{\alpha,m}f_\delta)(0)=(B_{\alpha,m} B_{\beta,m}f_\delta)(0)
$$
holds for every $\delta>0$, then 
\begin{equation}
U_{\alpha,\beta}(n)=U_{\beta,\alpha}(n),\qquad 0\le n<\frac{m}2.
\label{e8}
\end{equation}
\label{lem1}
\end{lemma}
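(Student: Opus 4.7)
The plan is to compare formula \eqref{BBR1} with the version obtained by interchanging $\alpha$ and $\beta$, and then to disentangle the two resulting series. Cancelling the common prefactor $m(\alpha\beta)^{2/m}/(2\pi\Gamma(2/m))$, the commutativity assumption is equivalent to
$$
\sum_{n\ge 0}\frac{U_{\alpha,\beta}(n)}{(\delta+\alpha)^{(2n+2)/m}}=\sum_{n\ge 0}\frac{U_{\beta,\alpha}(n)}{(\delta+\beta)^{(2n+2)/m}},\qquad \delta>0.
$$

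Next I would introduce the variable $v=(\delta+\alpha)^{-2/m}$, which sends $(0,\infty)$ bijectively onto $(0,\alpha^{-2/m})$. Under this substitution the left-hand side becomes the honest power series $\sum_{n\ge 0}U_{\alpha,\beta}(n)v^{n+1}$, whose radius of convergence is at least $\alpha^{-2/m}$ by \eqref{estar}. Since $\delta+\beta=v^{-m/2}(1+(\beta-\alpha)v^{m/2})$, the identity transforms into
$$
\sum_{n\ge 0}U_{\alpha,\beta}(n)v^{n+1}=\sum_{n\ge 0}U_{\beta,\alpha}(n)v^{n+1}\bigl(1+(\beta-\alpha)v^{m/2}\bigr)^{-(2n+2)/m}.
$$
Expanding the last factor by the binomial series and dividing through by $v$ gives
$$
\sum_{n\ge 0}\bigl[U_{\alpha,\beta}(n)-U_{\beta,\alpha}(n)\bigr]v^{n}=\sum_{n\ge 0}\sum_{k\ge 1}U_{\beta,\alpha}(n)\binom{-(2n+2)/m}{k}(\beta-\alpha)^{k}v^{n+km/2}.
$$

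Every exponent $n+km/2$ appearing on the right-hand side satisfies $n+km/2\ge m/2$ (as $n\ge 0$ and $k\ge 1$), so the right-hand side is $O(v^{m/2})$ as $v\to 0^{+}$. Because the left-hand side is a bona fide power series in $v$ near the origin, each of its Taylor coefficients $U_{\alpha,\beta}(n)-U_{\beta,\alpha}(n)$ with integer $n<m/2$ must vanish, which is precisely \eqref{e8}.

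The main technical obstacle lies in justifying the rearrangement into a double series and the accompanying $O(v^{m/2})$ bound. For this I would dominate $\sum_{k\ge 0}|\binom{-(2n+2)/m}{k}||\beta-\alpha|^{k}v^{km/2}$ by $(1-|\beta-\alpha|v^{m/2})^{-(2n+2)/m}$ and then combine this with \eqref{estar} in order to sum a geometric-type series in $n$, restricting $v$ to a sufficiently small neighbourhood of $0$ so that every manipulation is performed inside a region of absolute convergence.
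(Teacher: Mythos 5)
Your proof is correct and follows essentially the same route as the paper: you turn the large-$\delta$ limit into a small-variable expansion (your $v=(\delta+\alpha)^{-2/m}$ versus the paper's $t=\delta^{-2/m}$), use the bound \eqref{estar} to justify that the perturbed terms contribute only $O(v^{m/2})$, and then identify power-series coefficients of order $n<m/2$. The explicit binomial expansion and domination by $(1-|\beta-\alpha|v^{m/2})^{-(2n+2)/m}$ is just a more detailed write-up of the step the paper states as ``using \eqref{estar}, we get $\sum_{n\ge 0}t^n(U_{\alpha,\beta}(n)-U_{\beta,\alpha}(n))=O(t^{m/2})$.''
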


\begin{proof} 
Suppose that for every $\delta>0$ we have 
$(B_{\beta,m}B_{\alpha,m}f_\delta)(0)=(B_{\alpha,m} B_{\beta,m}f_\delta)(0)$. Then by \eqref{e7} we obtain 
\begin{equation}
\sum_{n\ge 0} \left(\frac{1}{\delta+\alpha}\right)^{\frac{2n+2}m}U_{\alpha,\beta}(n)
\\=\sum_{n\ge 0} \left(\frac{1}{\delta+\beta}\right)^{\frac{2n+2}m}U_{\beta,\alpha}(n),\quad \delta>0.\label{e3}
\end{equation}
We set $t=\delta^{-2/m}$ and rewrite \eqref{e3} as 
\begin{multline}
\sum_{n\ge 0} \frac{t^{n+1}}{(1+\alpha t^{m/2})^{(2n+2)/m}}U_{\alpha,\beta}(n)\\=\sum_{n\ge 0} \frac{t^{n+1}}{(1+\beta t^{m/2})^{(2n+2)/m}}U_{\beta,\alpha}(n),\qquad t>0.
\label{e4}
\end{multline}
Then, using \eqref{estar}, we get
$$
\sum_{n\ge 0}t^n(U_{\alpha,\beta}(n)-U_{\beta,\alpha}(n))=O(t^{m/2}), \qquad t\to 0,
$$
and hence, 
$$
U_{\alpha,\beta}(n)=U_{\beta,\alpha}(n),\qquad 0\le n<\frac{m}2.
$$
\end{proof}

\begin{lemma}\label{mnoneven} Let $m\not\in 2\mathbb Z_+$. If $\alpha\not=\beta$, then the commutativity relation 
\begin{equation}
(B_{\beta,m}B_{\alpha,m}f_\delta)(0)=(B_{\alpha,m} B_{\beta,m}f_\delta)(0),  \quad \delta>0,
\label{estar2}
\end{equation}
does not hold.
\end{lemma}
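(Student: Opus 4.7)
The plan is to strengthen Lemma~\ref{lem1} by extracting a fractional-power coefficient from equation~\eqref{e4}. Since $t^{n+1}/(1+\gamma t^{m/2})^{(2n+2)/m}=(t/(1+\gamma t^{m/2})^{2/m})^{n+1}$, I set $x_\gamma(t):=t/(1+\gamma t^{m/2})^{2/m}$ and define the power series $A(w):=\sum_{n\ge 0}U_{\alpha,\beta}(n)w^n$ and $B(w):=\sum_{n\ge 0}U_{\beta,\alpha}(n)w^n$, both with positive radius of convergence by~\eqref{estar}. Then \eqref{e4} reads $x_\alpha(t)A(x_\alpha(t))=x_\beta(t)B(x_\beta(t))$. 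Setting $r=x_\alpha(t)^{m/2}$ and $s=x_\beta(t)^{m/2}$ and eliminating $t^{m/2}$ from $r(1+\alpha t^{m/2})=s(1+\beta t^{m/2})=t^{m/2}$ produces the M\"obius-type relation $s=r/(1-\eta r)$ with $\eta:=\alpha-\beta\neq 0$, i.e., $x_\beta(t)=x_\alpha(t)(1-\eta\, x_\alpha(t)^{m/2})^{-2/m}$. Substituting into $xA(x)=yB(y)$ and writing $w$ for the free variable $x_\alpha(t)$ yields the one-variable identity
\begin{equation*}
A(w)=\sum_{n\ge 0}U_{\beta,\alpha}(n)\,w^n\,(1-\eta w^{m/2})^{-(2n+2)/m},\qquad 0<w<w_0.
\end{equation*}

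The core step is to isolate the coefficient of $w^{m/2}$ on the right. Binomial expansion gives
$$
(1-\eta w^{m/2})^{-(2n+2)/m}=\sum_{k\ge 0}\binom{(2n+2)/m+k-1}{k}\eta^k w^{mk/2},
$$
whose $k=0$ part reassembles $B(w)$. After justifying the rearrangement for $w>0$ small by absolute convergence (using~\eqref{estar}), one obtains
\begin{equation*}
A(w)-B(w)=\sum_{k\ge 1}\eta^k w^{mk/2}\,\tilde c_k(w),\qquad \tilde c_k(w):=\sum_{n\ge 0}U_{\beta,\alpha}(n)\binom{(2n+2)/m+k-1}{k}w^n.
\end{equation*}
Dividing by $w^{m/2}$ and letting $w\to 0^+$, only the $k=1$, $n=0$ term contributes to the limit:
$$
\lim_{w\to 0^+}\frac{A(w)-B(w)}{w^{m/2}}=\eta\,\tilde c_1(0)=\frac{2\eta}{m}\,U_{\beta,\alpha}(0),
$$
which is nonzero since $U_{\beta,\alpha}(0)>0$ (its defining integrand is strictly positive on $\mathbb{C}$).

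On the other hand, if~\eqref{estar2} held, Lemma~\ref{lem1} would force $U_{\alpha,\beta}(n)=U_{\beta,\alpha}(n)$ for every integer $n$ with $0\le n<m/2$, so the power series $A-B$ would vanish at $0$ to order at least $\lceil m/2\rceil$. Since $m\notin 2\mathbb{Z}_+$ means $m/2\notin\mathbb{Z}_+$, we have $\lceil m/2\rceil>m/2$, hence $(A(w)-B(w))/w^{m/2}=O(w^{\lceil m/2\rceil-m/2})\to 0$ as $w\to 0^+$, contradicting the nonzero limit above.

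The main obstacle is the initial algebraic reduction of $x_\alpha(t)A(x_\alpha(t))=x_\beta(t)B(x_\beta(t))$ to a clean single-variable functional equation in $w$, together with the justification of the binomial rearrangement; these rest on the M\"obius-type relation between $x_\alpha^{m/2}$ and $x_\beta^{m/2}$ and on the tail estimate~\eqref{estar}. Once these are in place, the contradiction between the leading fractional-power asymptotics of $A-B$ and the analyticity forced by Lemma~\ref{lem1} closes the argument.
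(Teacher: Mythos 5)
Your proposal is correct and runs on the same mechanism as the paper's own proof: since $m/2\notin\mathbb Z_+$, the equalities of Lemma \ref{l1} remove all integer powers of order $<m/2$ from the difference of the two sides of \eqref{e4}, while the fractional power $t^{m/2}$ (your $w^{m/2}$) carries a coefficient proportional to $(\alpha-\beta)U_{\alpha,\beta}(0)\neq 0$, yielding the contradiction. The paper reaches this directly by an $o(t^{m/2})$ asymptotic comparison in the variable $t$, and your M\"obius change of variable $w=x_\alpha(t)$ together with the binomial rearrangement (justified via \eqref{estar}) is a correct repackaging of essentially the same computation.
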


\begin{proof} 
By \eqref{e4} and \eqref{e8}, relation \eqref{estar2} implies that 
\begin{multline*}
\Bigl(\frac{1}{(1+\alpha t^{m/2})^{2/m}}-\frac{1}{(1+\beta t^{m/2})^{2/m}}\Bigr)U_{\alpha,\beta}(0)\\=
\sum_{0<n<\frac{m}2} \Bigl(\frac{t^n}{(1+\beta t^{m/2})^{(2n+2)/m}}-\frac{t^n}{(1+\alpha t^{m/2})^{(2n+2)/m}}\Bigr)U_{\alpha,\beta}(n)\\+
\sum_{n>\frac{m}2}  \Bigl(\frac{t^n}{(1+\beta t^{m/2})^{(2n+2)/m}}U_{\beta,\alpha}(n)-\frac{t^n}{(1+\alpha t^{m/2})^{(2n+2)/m}}U_{\alpha,\beta}(n)\Bigr)\\=o(t^{m/2}),\qquad t\to 0,
\end{multline*}
and hence,
$$
(\beta-\alpha)U_{\alpha,\beta}(0)=o(1),\qquad t\to 0,
$$
which is impossible unless $\alpha=\beta$.
\end{proof}

\begin{lemma}\label{moments} Let $m$ be an even positive integer. Suppose that the commutativity relation $(B_{\beta,m}B_{\alpha,m}f_\delta)(0)=(B_{\alpha,m}B_{\beta,m}f_\delta)(0)$
holds for every $\delta>0$ and every $\alpha,\beta>0$. Then $m=2$. 
\end{lemma}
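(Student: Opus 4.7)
The plan is to show that when $m = 2p$ is an even integer with $p \ge 2$, the $n = 1$ case of Lemma~\ref{l1} already yields an impossibility, so that only $m = 2$ remains.

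First, I would invoke Lemma~\ref{l1}: the hypothesis of Lemma~\ref{moments} applies Lemma~\ref{l1} to every pair $\alpha, \beta > 0$, giving $U_{\alpha,\beta}(n) = U_{\beta,\alpha}(n)$ for all $\alpha, \beta > 0$ and every $0 \le n < m/2$. When $p \ge 2$ the index $n = 1$ lies in this range, so $U_{\alpha,\beta}(1) = U_{\beta,\alpha}(1)$ for all $\alpha, \beta > 0$. The second step is to unfold $U_{\alpha,\beta}(1)$ into a tractable one-variable integral. Passing to polar coordinates and substituting $u = |z|^2$ gives
$$
U_{\alpha,\beta}(1) = \frac{\pi\alpha^{2/p}}{\Gamma(2/p)} \int_0^\infty \frac{u\, e^{-\beta u^p}}{S_{\alpha,m}(u)}\, du,
$$
and using $S_{\alpha,m}(u) = \sum_{k \ge 0} \alpha^{k/p} u^k/\Gamma((k+1)/p) = E_p(\alpha^{1/p} u)$, where $E_p(v) := \sum_{k\ge 0} v^k/\Gamma((k+1)/p)$, the further substitution $v = \alpha^{1/p} u$ disentangles the $\alpha$-dependence and produces
$$
U_{\alpha,\beta}(1) = \frac{\pi}{\Gamma(2/p)}\, V(\beta/\alpha), \qquad V(c) := \int_0^\infty \frac{v\, e^{-cv^p}}{E_p(v)}\, dv.
$$
Thus the constraint from Lemma~\ref{l1} reduces to the functional symmetry $V(c) = V(1/c)$ for every $c > 0$.

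The third and final step is to contradict this symmetry by comparing the boundary values of $V$. The single-term lower bound $E_p(v) \ge v^3/\Gamma(4/p)$ for $v \geq 1$, together with $E_p(0) = 1/\Gamma(1/p) > 0$, shows that $v/E_p(v)$ is integrable on $(0, \infty)$, so
$$
V(0) = \int_0^\infty \frac{v}{E_p(v)}\, dv \in (0,+\infty),
$$
and monotone convergence yields $V(c) \to V(0)$ as $c \to 0^+$. On the other hand, the upper bound $v e^{-cv^p}/E_p(v) \le \Gamma(1/p)\, v\, e^{-cv^p}$ and $\int_0^\infty v\, e^{-cv^p}\, dv = \Gamma(2/p)/(p c^{2/p})$ give $\lim_{c \to +\infty} V(c) = 0$. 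Sending $c \to 0^+$ in $V(c) = V(1/c)$ then forces
$$
0 < V(0) = \lim_{c \to 0^+} V(c) = \lim_{c \to 0^+} V(1/c) = \lim_{b \to +\infty} V(b) = 0,
$$
a contradiction. Hence $p = 1$, i.e., $m = 2$.

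The only non-routine step is the reduction of $U_{\alpha,\beta}(1)$ to $V(\beta/\alpha)$ via the two-step change of variables; once it is in place, the contradiction is a one-line limiting argument that illustrates why the $n = 1$ constraint of Lemma~\ref{l1}, which is vacuous only in the borderline case $p = 1$, already excludes every even $m \ge 4$.
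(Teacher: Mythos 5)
Your argument is correct, and it takes a genuinely different and much shorter route than the paper's. You use only the single consequence $U_{\alpha,\beta}(1)=U_{\beta,\alpha}(1)$ of Lemma~\ref{l1}, which is available exactly when $p=m/2\ge 2$, together with the observation that $n=1$ is precisely the index at which the scaling $v=\alpha^{1/p}|z|^2$ makes all powers of $\alpha$ cancel: indeed $S_{\alpha,2p}(u)=E_p(\alpha^{1/p}u)$ with $E_p(v)=\sum_{k\ge0}v^k/\Gamma((k+1)/p)$, and your computation $U_{\alpha,\beta}(1)=\frac{\pi}{\Gamma(2/p)}V(\beta/\alpha)$ with $V(c)=\int_0^\infty v e^{-cv^p}E_p(v)^{-1}\,dv$ checks out (polar factor $\pi$, cancellation of $\alpha^{2/p}$, the bounds $E_p(v)\ge 1/\Gamma(1/p)$ and $E_p(v)\ge v^3/\Gamma(4/p)$, and $\int_0^\infty ve^{-cv^p}dv=\Gamma(2/p)c^{-2/p}/p$ are all right). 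The resulting symmetry $V(c)=V(1/c)$ for all $c>0$ is impossible since $V$ is finite, positive and strictly decreasing; your limit comparison $\lim_{c\to0^+}V(c)=V(0)>0=\lim_{c\to\infty}V(c)$ works, and even the single inequality $V(1/2)>V(2)$ would do. By contrast, the paper keeps matching coefficients in \eqref{e4} at the higher indices $b=p$ and $b=2p$ to get \eqref{tt2}--\eqref{tt5}, combines these with the differentiation identities \eqref{tt6}--\eqref{tt7} and the symmetry of the mixed derivative $H$, and only then reaches a contradiction via the $\beta\to\infty$ asymptotics of the two integrals; your proof needs none of that machinery, uses no information at indices $n\ge m/2$, and is vacuous only in the borderline case $p=1$, which is exactly the admissible conclusion $m=2$. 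The one point worth making explicit in a write-up is that $n=1$ is special because the prefactor $\alpha^{4n/m}$ combines with the Jacobian of the scaling to give homogeneity degree $(n-1)/p$ in $\alpha$, which vanishes only at $n=1$ (for $n=0$ the analogous relation is a genuine scaling identity and yields no contradiction), so the simplification really does hinge on $m\ge4$.
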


\begin{proof} Let $m=2p$. By \eqref{e4} we have
$$
\sum_{n\ge 0} \frac{t^n}{(1+\alpha t^p)^{(n+1)/p}}U_{\alpha,\beta}(n)=\sum_{n\ge 0} \frac{t^n}{(1+\beta t^p)^{(n+1)/p}}U_{\beta,\alpha}(n),\qquad t>0.
$$

By the binomial formula, 
$$
(1+x)^q=\sum_{s\ge 0}c_{q,s}x^s,\qquad 0\le x<1,
$$
where 
$$
c_{q,0}=1,\quad c_{q,1}=q,\quad c_{q,2}=q(q-1)/2,
$$
we obtain
\begin{multline}
\sum_{n,s\ge 0}t^{n+sp}\alpha^s c_{-(n+1)/p,s}U_{\alpha,\beta}(n)\\=\sum_{n,s\ge 0}t^{n+sp}\beta^s c_{-(n+1)/p,s}U_{\beta,\alpha}(n),\qquad 0<t<1,
\end{multline}
with both series converging absolutely.

Hence, for every $b\in\mathbb Z_+$ we have
$$
\sum_{n+sp=b} \alpha^s c_{-(n+1)/p,s}U_{\alpha,\beta}(n)=\sum_{n+sp=b}\beta^s c_{-(n+1)/p,s}U_{\beta,\alpha}(n).
$$
By Lemma~\ref{l1} we know already that 
\begin{equation}
U_{\alpha,\beta}(0)=U_{\beta,\alpha}(0),\label{tt1}.
\end{equation}

Taking the values $b=p,2p$ we obtain 
\begin{equation}
U_{\alpha,\beta}(p)-\frac{\alpha}p U_{\alpha,\beta}(0)=U_{\beta,\alpha}(p)-\frac{\beta}p U_{\beta,\alpha}(0),\label{tt2}
\end{equation}
and
\begin{multline}
U_{\alpha,\beta}(2p)-\frac{\alpha(p+1)}p U_{\alpha,\beta}(p)+\frac{\alpha^2(p+1)}{2p^2}U_{\alpha,\beta}(0)\\=U_{\beta,\alpha}(2p)-\frac{\beta(p+1)}p U_{\beta,\alpha}(p)+\frac{\beta^2(p+1)}{2p^2}U_{\beta,\alpha}(0).\label{tt3}
\end{multline}

By \eqref{tt1} and \eqref{tt2} we obtain
\begin{equation}
U_{\alpha,\beta}(p)-U_{\beta,\alpha}(p)= \frac{\alpha-\beta}p U_{\alpha,\beta}(0).
\label{tt4}
\end{equation}

By \eqref{tt1}--\eqref{tt3} we obtain
\begin{multline}
U_{\alpha,\beta}(2p)-U_{\beta,\alpha}(2p)\\=\frac{(\alpha-\beta)(p+1)}p U_{\alpha,\beta}(p)-\frac{(\alpha-\beta)^2(p+1)}{2p^2}U_{\alpha,\beta}(0).
\label{tt5}
\end{multline}

Next we use that 
\begin{multline}
\frac{\partial}{\partial \beta}U_{\alpha,\beta}(n)=\frac{\partial}{\partial \beta}\frac{ \alpha^{2n/p}}{\Gamma(\frac{n+1}{p})} \int_{\mathbb C}  \frac{  |z|^{2n} e^{-  \beta |z|^{2p}}  }{ S_{\alpha,2p}(|z|^2)}\,dA(z)\\=
-\frac{ \alpha^{2n/p}}{\Gamma(\frac{n+1}{p})} \int_{\mathbb C}  \frac{  |z|^{2n+2p} e^{-  \beta |z|^{2p}}  }{ S_{\alpha,2p}(|z|^2)}\,dA(z)\\=-\frac{n+1}{\alpha^2p}\cdot\frac{ \alpha^{(2n+2p)/p}}{\Gamma(\frac{n+p+1}{p})} \int_{\mathbb C}  \frac{  |z|^{2n+2p} e^{-  \beta |z|^{2p}}  }{ S_{\alpha,2p}(|z|^2)}\,dA(z)\\=-\frac{n+1}{\alpha^2p}U_{\alpha,\beta}(n+p),\qquad n\ge 0.
\label{tt6}
\end{multline}
By analogy,
\begin{equation}
\frac{\partial}{\partial \alpha}U_{\beta,\alpha}(n)=-\frac{n+1}{\beta^2p}U_{\beta,\alpha}(n+p),\qquad n\ge 0.
\label{tt7}
\end{equation}

As a consequence, we obtain 
\begin{equation}
\left\{
\begin{aligned}
\frac{\partial}{\partial \alpha}U_{\beta,\alpha}(0)&=-\frac{1}{\beta^2p}U_{\beta,\alpha}(p),\\
\frac{\partial}{\partial \beta}U_{\alpha,\beta}(0)&=-\frac{1}{\alpha^2p}U_{\alpha,\beta}(p),\\
\frac{\partial}{\partial \alpha}U_{\beta,\alpha}(p)&=-\frac{p+1}{\beta^2p}U_{\beta,\alpha}(2p),\\ 
\frac{\partial}{\partial \beta}U_{\alpha,\beta}(p)&=-\frac{p+1}{\alpha^2p}U_{\alpha,\beta}(2p).
\end{aligned}
\right.
\label{start}
\end{equation}

Now, let us evaluate 
$$
H=\frac{\partial^2}{\partial \alpha\partial \beta}(\alpha^2\beta^2 U_{\alpha,\beta}(0)).
$$
First, by \eqref{tt1}, \eqref{tt4}, and \eqref{start} we have
\begin{multline}
H=\frac{\partial}{\partial \alpha}\Bigl[ 2\alpha^2\beta U_{\alpha,\beta}(0) -\frac{\beta^2}p U_{\alpha,\beta}(p)\Bigr]\\=
\frac{\partial}{\partial \alpha}\Bigl[ 2\alpha^2\beta U_{\beta,\alpha}(0) -\frac{\beta^2}p U_{\beta,\alpha}(p)-\frac{(\alpha-\beta)\beta^2}{p^2}U_{\beta,\alpha}(0)\Bigr]\\=
4\alpha\beta U_{\beta,\alpha}(0)-\frac{2\alpha^2}{\beta p}U_{\beta,\alpha}(p)+\frac{p+1}{p^2} U_{\beta,\alpha}(2p)-\frac{\beta^2}{p^2}U_{\beta,\alpha}(0)+\frac{\alpha-\beta}{p^3}U_{\beta,\alpha}(p)\label{tttdif}.
\end{multline}
By \eqref{tt1}, $H$ is stable under permutation of $\alpha$ and $\beta$, 
$$
H=\frac{\partial^2}{\partial \beta\partial \alpha}(\beta^2\alpha^2 U_{\beta,\alpha}(0)),
$$
and, hence,
\begin{multline*}
4\alpha\beta U_{\beta,\alpha}(0)-\frac{2\alpha^2}{\beta p}U_{\beta,\alpha}(p)+\frac{p+1}{p^2} U_{\beta,\alpha}(2p)-\frac{\beta^2}{p^2}U_{\beta,\alpha}(0)+\frac{\alpha-\beta}{p^3}U_{\beta,\alpha}(p)\\=
4\alpha\beta U_{\alpha,\beta}(0)-\frac{2\beta^2}{\alpha p}U_{\alpha,\beta}(p)+\frac{p+1}{p^2} U_{\alpha,\beta}(2p)-\frac{\alpha^2}{p^2}U_{\alpha,\beta}(0)+\frac{\beta-\alpha}{p^3}U_{\alpha,\beta}(p).
\end{multline*}

We obtain that
\begin{multline*}
\frac{p+1}{p^2} (U_{\alpha,\beta}(2p)-U_{\beta,\alpha}(2p))+ \frac{\beta^2}{p^2}U_{\beta,\alpha}(0)-\frac{\alpha^2}{p^2}U_{\alpha,\beta}(0)\\
+\frac{2\alpha^2}{\beta p}U_{\beta,\alpha}(p)-\frac{2\beta^2}{\alpha p}U_{\alpha,\beta}(p)
+\frac{\beta-\alpha}{p^3}U_{\alpha,\beta}(p)
+\frac{\beta-\alpha}{p^3}U_{\beta,\alpha}(p)=0,
\end{multline*}
and by \eqref{tt1} and \eqref{tt5} we get
\begin{multline*}
\frac{(\alpha-\beta)(p+1)^2}{p^3} U_{\alpha,\beta}(p)-\frac{(\alpha-\beta)^2(p+1)^2}{2p^4}U_{\alpha,\beta}(0)+ \frac{\beta^2-\alpha^2}{p^2}U_{\alpha,\beta}(0)\\
+\frac{2\alpha^2}{\beta p}U_{\beta,\alpha}(p)-\frac{2\beta^2}{\alpha p}U_{\alpha,\beta}(p)
+\frac{\beta-\alpha}{p^3}(U_{\alpha,\beta}(p)
+U_{\beta,\alpha}(p))=0,
\end{multline*}

By \eqref{tt4} we obtain
\begin{multline*}
\frac{(\alpha-\beta)(p+1)^2}{p^3} U_{\alpha,\beta}(p)-\frac{(\alpha-\beta)^2(p+1)^2}{2p^4}U_{\alpha,\beta}(0)- \frac{\alpha^2-\beta^2}{p^2}U_{\alpha,\beta}(0)\\
-\frac{2\alpha^2(\alpha-\beta)}{\beta p^2}U_{\alpha,\beta}(0)+\Bigl(\frac{2\alpha^2}{\beta p}
-\frac{2\beta^2}{\alpha p}\Bigr)U_{\alpha,\beta}(p)
\\-\frac{2(\alpha-\beta)}{p^3}U_{\alpha,\beta}(p)+\frac{(\alpha-\beta)^2}{p^4}U_{\alpha,\beta}(0)=0,
\end{multline*}
Dividing by $\alpha-\beta$ we conclude that for $\alpha\not=\beta$ we have 
\begin{multline*}
\Bigl(\frac{(p+1)^2}{p^3} +\frac{2(\alpha^2+\alpha\beta+\beta^2)}{\alpha\beta p}-\frac{2}{p^3}\Bigr)U_{\alpha,\beta}(p) 
\\ =\Bigl(\frac{(\alpha-\beta)(p+1)^2}{2p^4}+\frac{\alpha+\beta}{p^2}+\frac{2\alpha^2}{\beta p^2}-\frac{\alpha-\beta}{p^4}\Bigr)U_{\alpha,\beta}(0).
\end{multline*}
Fix $\alpha=1$. Then
\begin{multline*}
\Bigl(\frac{2\beta}{p}+O(1)\Bigr)U_{\alpha,\beta}(p)  =\Bigl(\Bigl(\frac{1}{p^2}+\frac{1}{p^4}-\frac{(p+1)^2}{2p^4}\Bigr)\beta+O(1)\Bigr)U_{\alpha,\beta}(0)\\=
\Bigl(\frac{(p-1)^2}{2p^4}\beta+O(1)\Bigr)U_{\alpha,\beta}(0),\qquad \beta\to\infty.
\end{multline*}

Suppose now that $m>2$, that is, $p>1$. Then for some $C>0$ and for all $\beta\ge \beta_0$ we have 
$$
\int_{\mathbb C}  \frac{  |z|^{2p} e^{-  \beta |z|^{2p}}  }{ S_{1,2p}(|z|^2)}\,dA(z)\ge C \int_{\mathbb C}  \frac{e^{-  \beta |z|^{2p}}  }{ S_{1,2p}(|z|^2)}\,dA(z).
$$
Since
\begin{align*}
\int_{\mathbb C}  \frac{e^{-  \beta |z|^{2p}}  }{ S_{1,2p}(|z|^2)}\,dA(z)&\ge C\beta^{-1/p},\qquad \beta\to\infty,\\
\int_{\mathbb C}  \frac{|z|^{2p} e^{-  \beta |z|^{2p}}  }{ S_{1,2p}(|z|^2)}\,dA(z)&=O(\beta^{-(p+1)/p}),\qquad \beta\to\infty,
\end{align*}
we obtain a contradiction. Thus, $m=2$. 
\end{proof}

\begin{proof}[Proof of Theorem A] The implication $(4)\implies(1)$ is contained in the book of Zhu \cite[Section 3.2]{Z}.
The proof of the remaining parts follows from Lemmata \ref{mnoneven} and \ref{moments}.
\end{proof}

\end{document}